\providecommand{\U}[1]{\protect\rule{.1in}{.1in}}
\providecommand{\U}[1]{\protect\rule{.1in}{.1in}}
\newtheorem{teorema}{Theorem}[section]
\newtheorem{lema}[teorema]{Lemma}
\newtheorem{corolario}[teorema]{Corollary}
\newtheorem{ejemplo}[teorema]{Example}
\numberwithin{equation}{section}
\newcommand{\Z}{\mathbb{Z}}
\newcommand{\C}{\mathbb{C}}
\def\cF{\mathcal F}
\begin{document}

\title{What is a true spectra of a finite Fourier transform}

\author{Javier Diaz-Vargas}
\address{Universidad Aut\'{o}noma de Yucat\'{a}n}
\email{javier.diaz@correo.uady.mx}
\author{Lev Glebsky}
\address{Universidad Aut\'onoma de San Luis Potos\'i}
\email{glebsky@cactus.iico.uaslp.mx}
\author{Carlos Jacob Rubio-Barrios}
\address{Universidad Aut\'{o}noma de Yucat\'{a}n}
\email{carlos.rubio@correo.uady.mx}

\thanks{This paper is in final form and no version of it will be submitted for
publication elsewhere.}

\subjclass[2010]{Primary 11T06; Secondary 13M10}
\keywords{}

\begin{abstract}
  In this  paper we deal with a finite abelian group $G$ and the abstract Fourier transform 
${\mathcal F}:\C^G\to \C^{\hat{G}}$. Then, we consider $\tilde{j}\circ \cF:\C^G\to \C^{\hat G}$ where 
$\tilde j:\C^{\hat{G}}\to \C^G$ is defined by the composition with a bijection $j:G\to \hat{G}$.
($\tilde j$ is a pullback of $j$.)   
 In particular, we  show that $(\tilde{j}\circ {\mathcal F})^2$ is a permutation if and only if $j$ is 
a group isomorphism. Then, we study how the spectra of $\tilde{j}\circ\cF$ depends on the isomorphism $j$.
\end{abstract}

\maketitle

\section{Introduction}

Let $G$ be a finite abelian group and $\hat G$ its dual. 
Abstractly, a Fourier transform is a linear operator $\cF:\C^G\to \C^{\hat G}$.
So, it is a linear map from one space to another and it is worthless to speak 
about its spectrum and period, etc. In order to do it, we have to identify
$\C^G$ with $\C^{\hat G}$. To this end, we may take a bijection 
$j:G\to \hat G$ and consider $\tilde j:\C^{\hat G}\to \C^G$ defined as
$(\tilde jf)(g)=f(j(g))$. A ``concrete'' Fourier transform is a composition
$\tilde j\circ\cF$.  
Our first result: $(\tilde j\circ\cF)^2$ is a permutation if and only if $j$ is an isomorphism.
In this case $(\tilde j\circ\cF)^2$ is (a pullback of) $\hat{j}^{-1}\circ j$ which is an automorphism of $G$.
Here, $\hat{j}:G\to \hat G$ is the dual of $j$.
This result distinguishes isomorphisms from other bijections by ``Fourier transform's point of view''.
 So, further we consider the case when $j$ is an isomorphism. 
But, which isomorphism $j$ should we take? From the pure group theoretic point of view, there are no natural choices
for $j$. Of course, we may try to find $j$ with $\hat{j}^{-1}\circ j$  equal to a fixed automorphism. Such a choice 
is not always possible and, if possible, is not unique. For example, if $G$ is an additive group of 
 $\Z/n\Z$, then $\hat{j}^{-1}\circ j$ is always the map $x\to -x$. Still, we show that, in this case, the spectra 
(the multiplicities of eigenvalues) of $\tilde{j}\circ\cF$ does depend on $j$.   
 If $G=(\Z/n\Z)^m$, then  there are more flexibility in the choice of  $\hat{j}^{-1}\circ j$.  
Particularly,  $\hat{j}^{-1}\circ j$ could be the identical transformation for even $m$.
We consider several other examples and show how the spectra may be calculated in those cases.

We do not give a receipt for choosing $j$. We just point out that there are several choices and the properties of
the Fourier transform do depend on these choices. 
In applied mathematics, for an additive group $\Z/n\Z$, which is a cyclic group $C_n$, up to an isomorphism, 
we choose $j(x)=w^x$, where
$w$ is the $n$-th root of unity closest to $1$. This corresponds to the choice of a generator of $C_n$.  
If $G=(\Z/n\Z)^m$, we may choose $j$ to be
the $m$-th power of the above defined $j$. Are those choices natural? 
Of course, the answer to this question depends on
the problem. If $G$ comes with presentation as a direct product (which is not unique), then there is a naturally 
defined presentation of $\hat G$ as a direct product. So, in this case, the choice of $j$ as a direct product might 
be natural. On the other hand, it is known that there are no natural isomorphism $G\to \hat G$ 
in the category of abelian 
groups. So, it looks like that for a pure group-theoretical problem
(without some additional structure) there is no natural choice for an isomorphism $j$. 
So, it is interesting to study the dependence of $\tilde j\circ\cF$ on the isomorphism $j$.

\section{Preliminaries}

Let $G$ be a finite abelian group. Set $\C^G=\{f:G\to \C\}$, the $\C$-valued functions on $G$. This is a $\C$-vector space of functions. Every $f\in \C^G$ can be expressed as a linear combination of the delta functions $\delta_g:G\to \{0,1\}$ defined by
$$\delta_g(x)=\left\{\begin{array}{ll}
1 & \mbox{if}\;\; x=g,\\
\phantom{-} & \phantom{-}\\
0 & \mbox{if}\;\; x\neq g,
\end{array}\right.$$
for every $g\in G$, as follows
$$f=\sum_{g\in G} f(g)\delta_g.$$
Indeed, evaluate both sides at each $x\in G$ and we get the same value. The functions $\delta_g$ span $\C^G$ and they are linearly independent: if $\sum_{g\in G} a_g\delta_g=0$, then evaluating the sum at $x\in G$ shows $a_x=0$. Thus, the functions $\delta_g$ are a basis of $\C^G$, so $\dim \C^G=|G|$.

A character of $G$ is a homomorphism $\chi:G\to S^1$. For a character $\chi$ on $G$, the conjugate character is the function $\overline{\chi}:G\to S^1$ given by $\overline{\chi}(g):=\overline{\chi(g)}$. Since for any complex number $z$ with $|z|=1$,  $\overline{z}=\frac{1}{z}$, we have that $\overline{\chi}(g)=\chi(g)^{-1}=\chi(g^{-1})$. The dual group, or character group, of $G$ is the set of homomorphisms $G\to S^1$ with the group law of pointwise multiplication of functions: $(\chi\psi)(g)=\chi(g)\psi(g)$. The dual group of $G$ is denoted by $\hat{G}$.

The following result is well known.

\begin{teorema}\label{teo1}
If $G$ is a finite abelian group, then $G$ and $\hat{G}$ are isomorphic.
\end{teorema}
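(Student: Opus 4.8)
The plan is to reduce to the cyclic case via the structure theorem for finite abelian groups, using the fact that the dual construction is compatible with direct products. First I would establish two auxiliary facts. The first is that for any two finite abelian groups $A$ and $B$ one has $\widehat{A\times B}\cong \hat A\times \hat B$: a character $\chi$ of $A\times B$ determines the pair of characters $a\mapsto \chi(a,e)$ and $b\mapsto \chi(e,b)$, and conversely a pair $(\alpha,\beta)\in\hat A\times\hat B$ assembles into the character $(a,b)\mapsto \alpha(a)\beta(b)$; these two assignments are mutually inverse group homomorphisms. By induction this extends to any finite direct product.

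The second fact is that a cyclic group is isomorphic to its own dual. If $C_n=\langle g\rangle$ has order $n$, then a character $\chi$ is completely determined by the value $\chi(g)$, and since $g^n=e$ this value must be an $n$-th root of unity; conversely each $n$-th root of unity $\omega$ yields a well-defined character by $g^k\mapsto \omega^k$. Thus $\chi\mapsto\chi(g)$ is an isomorphism from $\hat{C_n}$ onto the group $\mu_n$ of $n$-th roots of unity, and $\mu_n$ is itself cyclic of order $n$, generated by $e^{2\pi i/n}$. Hence $\hat{C_n}\cong C_n$.

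To finish, I would invoke the fundamental theorem of finite abelian groups to write $G\cong C_{n_1}\times\cdots\times C_{n_k}$ as a product of cyclic groups. Applying the first fact gives $\hat G\cong \hat{C_{n_1}}\times\cdots\times \hat{C_{n_k}}$, and applying the second fact to each factor gives $\hat{C_{n_i}}\cong C_{n_i}$. Stringing these together yields $\hat G\cong C_{n_1}\times\cdots\times C_{n_k}\cong G$, as required.

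I do not expect any single step to be a serious obstacle, since each piece is elementary; the only point deserving care is the cyclic case, where one must check that the prescription $g^k\mapsto \omega^k$ is well defined (independent of the representative $k$ modulo $n$), which holds precisely because $\omega^n=1$. I would also emphasize, in keeping with the theme of the paper, that the isomorphism produced here is not canonical: it depends on the chosen generators of the cyclic factors, reflecting the absence of a natural isomorphism $G\to\hat G$ in the category of finite abelian groups.
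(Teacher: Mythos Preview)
Your argument is correct and is the standard proof of this classical fact: reduce to cyclic factors via the structure theorem, verify $\widehat{A\times B}\cong\hat A\times\hat B$, and check $\widehat{C_n}\cong C_n$ directly via $\chi\mapsto\chi(g)$. Each step is handled carefully, including the well-definedness check in the cyclic case.

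There is nothing to compare against, however: the paper does not supply a proof of this theorem. It simply records the statement as ``well known'' and moves on, using it later only implicitly (e.g., to know that bijections $j:G\to\hat G$ exist). Your write-up is exactly the argument one would expect a reader to supply, and your closing remark about non-canonicity is well taken and indeed aligns with the paper's broader theme.
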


The next theorem is the first step leading to an expression for each $\delta_g$ as a linear combination of characters of $G$, which will lead to a Fourier series expansion of $f$ (see \cite{Luong}).

\begin{teorema}[Orthogonality relations]
  Let $G$ be a finite abelian group. Then
  $$\sum_{g\in G} \chi(g)=\left\{\begin{array}{ll}
  |G| & \mbox{if}\;\; \chi=\chi_0,\\
  \phantom{-} & \phantom{-}\\
  0 & \mbox{if}\;\; \chi\neq \chi_0,
  \end{array}\right.\qquad\qquad
  \sum_{\chi\in \widehat{G}} \chi(g)=\left\{\begin{array}{ll}
  |G| & \mbox{if}\;\; g=1,\\
  \phantom{-} & \phantom{-}\\
  0 & \mbox{if}\;\; g\neq 1.
  \end{array}\right.$$
  \end{teorema}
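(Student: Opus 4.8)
The plan is to prove both identities by the same device: multiply each sum by a value that merely permutes the index set, conclude that the sum is fixed under multiplication by a quantity different from $1$, and thereby force it to vanish.

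For the first identity, when $\chi=\chi_0$ every term equals $1$ and the sum is visibly $|G|$. When $\chi\neq\chi_0$, I would fix an element $h\in G$ with $\chi(h)\neq 1$ and set $S=\sum_{g\in G}\chi(g)$. Since $g\mapsto hg$ is a bijection of $G$, reindexing gives $\chi(h)S=\sum_{g\in G}\chi(h)\chi(g)=\sum_{g\in G}\chi(hg)=\sum_{g\in G}\chi(g)=S$. Hence $(\chi(h)-1)S=0$, and since $\chi(h)\neq 1$ we conclude $S=0$.

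The second identity is formally dual. When $g=1$ every $\chi(1)=1$, so the sum equals $|\hat G|$, which equals $|G|$ by Theorem \ref{teo1}. When $g\neq 1$, the same trick applies as soon as I produce a character $\psi$ with $\psi(g)\neq 1$: writing $T=\sum_{\chi\in\hat G}\chi(g)$ and using that $\chi\mapsto\psi\chi$ permutes $\hat G$, I obtain $\psi(g)T=\sum_{\chi\in\hat G}\psi(g)\chi(g)=\sum_{\chi\in\hat G}(\psi\chi)(g)=T$, whence $(\psi(g)-1)T=0$ and $T=0$.

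The one step that requires more than bookkeeping is the existence of such a separating character: for each $g\neq 1$ there should be some $\psi\in\hat G$ with $\psi(g)\neq 1$. I would obtain this from the structure theorem for finite abelian groups, writing $G\cong\prod_i \Z/n_i\Z$; on a cyclic factor $\Z/n\Z$ the assignment $x\mapsto e^{2\pi i x/n}$ is a character nontrivial on every nonzero element, and composing it with the projection onto a coordinate where $g$ is nonzero produces the required $\psi$. I expect this to be the main obstacle, in the sense that it is the only place where one genuinely uses that $G$ possesses enough characters to separate its points, rather than a purely formal manipulation of sums.
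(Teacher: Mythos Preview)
Your argument is correct and is the classical proof of the orthogonality relations: the shift trick $S=\chi(h)S$ and its dual $T=\psi(g)T$, together with the existence of a separating character obtained from the structure theorem. The paper, however, does not prove this theorem at all; it states it as a standard result and refers the reader to \cite{Luong}. So there is nothing to compare against---your write-up simply supplies what the paper omits.
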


The Fourier transform ${\mathcal F}:\C^G\to \C^{\hat{G}}$ is the linear map defined as
$${\mathcal F}(f)(\chi)=\frac{1}{\sqrt{|G|}}\sum_{g\in G} f(g)\chi(g)$$
where $\C^G$ and $\C^{\hat{G}}$ are $\C$-vector spaces of dimension $|G|=|\hat{G}|$.

The process of recovering $f$ from its Fourier transform ${\mathcal F}(f)$ is called Fourier inversion.
The following theorem is a direct corollary of the Orthogonality relations.
\begin{teorema}[Fourier inversion]
  Let $G$ be a finite abelian group. If $f\in \C^G$, then
  $$f(x)=\frac{1}{\sqrt{|G|}}\sum_{\chi\in\widehat{G}} {\mathcal F}(f)(\chi)\bar\chi(x)$$
for all $x\in G$.
\end{teorema}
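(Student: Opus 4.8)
The plan is to substitute the definition of the Fourier transform directly into the claimed right-hand side and reduce everything to the second orthogonality relation. Writing $R(x)$ for the right-hand side of the asserted identity, I would first replace ${\mathcal F}(f)(\chi)$ by $\frac{1}{\sqrt{|G|}}\sum_{g\in G} f(g)\chi(g)$, so that
$$R(x)=\frac{1}{|G|}\sum_{\chi\in\widehat{G}}\sum_{g\in G} f(g)\,\chi(g)\,\bar\chi(x).$$
Since all sums are finite, I can freely interchange the order of summation and pull $f(g)$ out of the inner sum, obtaining
$$R(x)=\frac{1}{|G|}\sum_{g\in G} f(g)\sum_{\chi\in\widehat{G}} \chi(g)\,\bar\chi(x).$$

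Next I would simplify the inner sum. Using $\bar\chi(x)=\chi(x^{-1})$, established in the Preliminaries, together with the fact that each $\chi$ is a homomorphism, the summand becomes $\chi(g)\chi(x^{-1})=\chi(gx^{-1})$, so the inner sum is exactly $\sum_{\chi\in\widehat{G}}\chi(gx^{-1})$. This is precisely the left-hand side of the second orthogonality relation evaluated at the element $gx^{-1}$: it equals $|G|$ when $gx^{-1}=1$, i.e. when $g=x$, and $0$ otherwise.

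Finally I would substitute this back. Only the term $g=x$ survives, contributing $\frac{1}{|G|}\,f(x)\,|G|=f(x)$, while every other term vanishes, giving $R(x)=f(x)$ as required.

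I do not expect a genuine obstacle here, since the argument is a finite manipulation with no convergence or analytic subtleties. The only points requiring care are bookkeeping: keeping the two normalizing factors $1/\sqrt{|G|}$ straight so that they combine to $1/|G|$, and correctly collapsing $\chi(g)\bar\chi(x)$ into the single character value $\chi(gx^{-1})$ so that the second orthogonality relation can be invoked verbatim. The essential idea is simply that orthogonality of characters forces the double sum to concentrate on the diagonal $g=x$.
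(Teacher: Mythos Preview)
Your argument is correct and is precisely the intended one: the paper does not spell out a proof but merely remarks that the theorem ``is a direct corollary of the Orthogonality relations,'' and what you have written is exactly that direct derivation. There is nothing to add.
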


Since ${\mathcal F}$ is an isomorphism of two different vector spaces, we are not allowed to talk about ${\mathcal F}^2$, the spectra of ${\mathcal F}$, etc.

On $\C^G$ there is a natural unitary scalar product $\langle \cdot,\cdot\rangle$ defined as follows
$$\langle f_1,f_2\rangle=\frac{1}{\sqrt{|G|}}\sum_{g\in G} \overline{f_1}(g)f_2(g).$$
With this scalar product, $\C^G$ and $\C^{\hat{G}}$ are unitary $\C$-vector spaces. 
The orthogonality relations imply that
$$\langle {\mathcal F}(f_1), {\mathcal F}(f_2)\rangle=\langle f_1,f_2\rangle$$
for all $f_1, f_2\in \C^G$. So, $\mathcal F$ is a unitary transform $\C^G\to\C^{\hat G}$.

\section{Main results of this paper}

Let us note, that $\C^G$ forms an algebra under pointwise addition and multiplication: 
$$
(f_1+f_2)(g)=f_1(g)+f_2(g),\;\;\;\;\;(f_1f_2)(g)=f_1(g)f_2(g).
$$
So, $\C^G$ contains multiplicative subgroup $(S^1)^G$. Naturally, $\hat G\subset (S^1)^G\subset \C^G$.
Let $j:G\to \hat{G}$ be a function and $\hat{j}:G\to \C^G$ be its dual function defined 
by $\hat{j}(h)(x)=\overline{j(x)(h)}$. Notice that $\hat{j}(G)\subseteq (S^1)^G$ and $G\to (S^1)^G$ is always a homomorphism (even if $j$ is not).

\begin{teorema}\label{teo2}
$\hat{j}(G)\subset \hat{G}$ if and only if $j$ is a homomorphism.
\end{teorema}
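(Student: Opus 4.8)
The plan is to reduce both sides of the claimed equivalence to a single pointwise multiplicativity identity and observe that they literally coincide. The key conceptual point is that the inclusion $\hat{j}(G)\subset\hat{G}$ asserts that each function $\hat{j}(h):G\to S^1$ is itself a character, hence a homomorphism, and that ``$\hat{j}(h)$ is a homomorphism'' unwinds directly into the multiplicativity of $j$. So the proof will be a definitional computation rather than anything requiring a clever device.

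First I would fix $h\in G$ and spell out what membership $\hat{j}(h)\in\hat{G}$ means. By definition $\hat{j}(h)(x)=\overline{j(x)(h)}$, and since $j(x)\in\hat{G}$ we already know $\hat{j}(h)(x)\in S^1$; the remaining content of $\hat{j}(h)\in\hat{G}$ is therefore exactly that $\hat{j}(h)$ is multiplicative, i.e.\ $\hat{j}(h)(xy)=\hat{j}(h)(x)\,\hat{j}(h)(y)$ for all $x,y\in G$. This isolates the one condition I need to analyze.

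Next I would evaluate both sides of that identity. The left-hand side is $\overline{j(xy)(h)}$, while the right-hand side is $\overline{j(x)(h)}\,\overline{j(y)(h)}=\overline{j(x)(h)\,j(y)(h)}$, using that complex conjugation on $S^1$ is multiplicative. Because conjugation is an injective homomorphism of $S^1$, this equality is equivalent to $j(xy)(h)=j(x)(h)\,j(y)(h)$, and since the group law on $\hat{G}$ is pointwise multiplication the right-hand side is $(j(x)j(y))(h)$. Thus, for each fixed $h$, multiplicativity of $\hat{j}(h)$ is equivalent to $j(xy)(h)=(j(x)j(y))(h)$ for all $x,y$.

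Finally I would quantify over $h$. The inclusion $\hat{j}(G)\subset\hat{G}$ holds precisely when $j(xy)(h)=(j(x)j(y))(h)$ for all $h,x,y\in G$; reordering the universal quantifiers, this says $j(xy)=j(x)j(y)$ in $\hat{G}$ for all $x,y$, which is exactly the assertion that $j$ is a homomorphism. I do not expect a genuine obstacle: the only steps needing care are tracking the order of the quantifier over $h$ against those over $x,y$, and recording that conjugation is an injective group homomorphism of $S^1$ so that multiplicativity transfers losslessly between $\hat{j}(h)$ and $j$.
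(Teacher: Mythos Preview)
Your proof is correct and follows essentially the same route as the paper's own argument: both reduce the question to the pointwise identity $\overline{j(xy)(h)}=\overline{j(x)(h)}\,\overline{j(y)(h)}$ and then strip off the conjugation to obtain $j(xy)(h)=(j(x)j(y))(h)$. The only cosmetic difference is that the paper separates the two implications while you package them as a single chain of equivalences.
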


\begin{proof}
Suppose that $j$ is a homomorphism and let $g\in G$. Then, for all $x, y\in G$ we have
\begin{align*}
\hat{j}(g)(xy) &= \overline{j(xy)(g)}=\overline{(j(x)j(y))(g)}= \overline{j(x)(g)j(y)(g)}=\overline{j(x)(g)}\cdot \overline{j(y)(g)}\\
&= \hat{j}(g)(x)\hat{j}(g)(y)
\end{align*}
and hence $\hat{j}(G)\subset \hat{G}$.\\
Conversely, suppose that $\hat{j}(G)\subset \hat{G}$ and let $x,y,h\in G$. Then,
\begin{align*}
j(xy)(h) &= \overline{\overline{j(xy)(h)}}=\overline{\hat{j}(h)(xy)}\\
&= \overline{\hat{j}(h)(x)\hat{j}(h)(y)}\\
&= \overline{\overline{j(x)(h)j(y)(h)}}=(j(x)(h))(j(y)(h))=(j(x)j(y))(h),
\end{align*}
and therefore $j(xy)=j(x)j(y)$ for all $x, y\in G$, that is, $j$ is a homomorphism.
\end{proof}

\begin{corolario}\label{coro1}
$j$ is an isomorphism if and only if $\hat{j}(G)=\hat{G}$.
\end{corolario}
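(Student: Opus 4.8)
The plan is to deduce the Corollary from Theorem~\ref{teo2} together with a single counting argument, so that both implications collapse to the fact that a surjection between finite sets of equal size is a bijection (recall $|G|=|\hat G|$ by Theorem~\ref{teo1}). First I would record two auxiliary facts available as soon as $j$ is a homomorphism. The map $\hat j$ is then itself a homomorphism $G\to\hat G$: since each $j(x)$ lies in $\hat G$ it is multiplicative in its argument, whence $\hat j(h_1h_2)(x)=\overline{j(x)(h_1h_2)}=\overline{j(x)(h_1)}\,\overline{j(x)(h_2)}=(\hat j(h_1)\hat j(h_2))(x)$. Moreover the construction is an involution, $\hat{\hat j}=j$, which follows at once from $\overline{\overline z}=z$ since $\hat{\hat j}(h)(x)=\overline{\hat j(x)(h)}=\overline{\overline{j(h)(x)}}=j(h)(x)$. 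This symmetry is what will let me get the converse almost for free once the forward direction is in hand.

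For the forward implication, assume $j$ is an isomorphism. By Theorem~\ref{teo2} we have $\hat j(G)\subseteq\hat G$, and by the remarks above $\hat j$ is a homomorphism. The key step is to show $\hat j$ is injective: if $\hat j(h)$ is the trivial character, then $j(x)(h)=1$ for every $x\in G$, and since $j$ is surjective this says $\chi(h)=1$ for all $\chi\in\hat G$, which forces $h=1$ by the second Orthogonality relation. Hence $|\hat j(G)|=|G|=|\hat G|$, and combined with $\hat j(G)\subseteq\hat G$ this yields $\hat j(G)=\hat G$.

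For the converse, assume $\hat j(G)=\hat G$. In particular $\hat j(G)\subseteq\hat G$, so Theorem~\ref{teo2} makes $j$ a homomorphism and hence $\hat j$ a surjective homomorphism between finite groups of equal order, i.e.\ an isomorphism. Applying the forward implication already proved to the isomorphism $\hat j$ gives $\hat{\hat j}(G)=\hat G$; by the involution identity $\hat{\hat j}=j$ this is exactly $j(G)=\hat G$, so $j$ is a surjective homomorphism between finite sets of equal size, hence an isomorphism.

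I expect the only genuine obstacle to be the injectivity step in the forward direction, since that is where the actual duality content (characters separate points) enters through the Orthogonality relations; everything else is bookkeeping with Theorem~\ref{teo2}, the involution $\hat{\hat j}=j$, and cardinalities. A self-contained alternative to the involution shortcut in the converse would be to establish the identity $|\hat j(G)|=|j(G)|$ directly, by identifying $\ker\hat j$ with the annihilator of the subgroup $j(G)\le\hat G$ and invoking $|j(G)^{\perp}|=|G|/|j(G)|$; either route closes the argument.
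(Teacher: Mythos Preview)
Your proof is correct and follows essentially the same route as the paper's: both directions hinge on Theorem~\ref{teo2}, the injectivity of $\hat j$ via the orthogonality relations (characters separate points), and the involution $\hat{\hat j}=j$ to close the converse. You spell out the verifications of $\hat{\hat j}=j$ and of $\hat j$ being a homomorphism that the paper leaves implicit, but the architecture is identical.
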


\begin{proof}
Suppose that $j$ is an isomorphism. By Theorem~\ref{teo2} we have 
$\hat{j}(G)\subset \hat{G}$ and $\hat{j}:G\to \hat{G}$. 
Moreover, $\hat{j}$ is a homomorphism.
We will prove that $\hat{j}$ is injective and thus it will be an isomorphism since $|G|=|\hat{G}|$ and $G$ is finite. Let $h\in\ker \hat{j}$. Then, $\hat{j}(h)(x)=1$ for all $x\in G$. It follows that $j(x)(h)=1$ for all $x\in G$ 
and, therefore, $\sum_{x\in G} j(x)(h)=|G|$. Since $j$ is a bijection, $j(x)$ runs over all $\hat{G}$ as $x$ runs over 
all $G$. By the orthogonality relations we obtain that $h$ is the identity of $G$. Therefore, $\hat{j}$ is injective.

Suppose now that $\hat{j}(G)=\hat{G}$. It follows that $\hat{j}:G\to\hat G$ is an isomorphism and we  deduce,
applying the above arguments to $\hat j$, 
that $\hat{\hat{j}}(G)=\hat G$ and $\hat{\hat{j}}$ is an isomorphism. We just have to check that $\hat{\hat{j}}=j$.
\end{proof}

Let $j:G\to \hat{G}$ be a bijection. Then, $j$ induces a unitary linear map $\tilde{j}:\C^{\hat{G}}\to \C^G$ defined as
$$\tilde{j}(\phi)(g)=\phi(j(g)).$$
Now, the composition $\tilde{j}\circ {\mathcal F}:\C^G\to \C^G$ is a unitary linear function. 
We will say that $P:\C^G\to \C^G$ is a permutation if $P(f)=f\circ p$ for all $f\in\C^G$, where $p:G\to G$ is 
a bijection.

\begin{teorema}\label{teo3}
Let $G$ be a finite abelian group, $j:G\to \hat{G}$ a bijection and $P=(\tilde{j}\circ {\mathcal F})^2$. 
Then, $P$ is a permutation if and only if $j$ is an isomorphism. In this case $p=\hat{j}^{-1}\circ j$ is an isomorphism.
\end{teorema}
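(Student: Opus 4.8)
The plan is to make everything explicit in the basis of delta functions and to reduce the whole statement to a pointwise identity about the pairing $j(g)(h)$. First I would record the operator $T:=\tilde j\circ\cF$ in the delta basis. Since $\cF(\delta_m)(\chi)=\frac{1}{\sqrt{|G|}}\chi(m)$, one gets $T(\delta_m)(g)=\cF(\delta_m)(j(g))=\frac{1}{\sqrt{|G|}}j(g)(m)$, and squaring gives
$$P(f)(g)=(T^2f)(g)=\frac{1}{|G|}\sum_{h\in G}\sum_{k\in G}f(k)\,j(g)(h)\,j(h)(k),$$
so that the $(g,k)$-entry of $P$ in the delta basis is $P_{g,k}=\frac1{|G|}\sum_{h\in G}j(g)(h)\,j(h)(k)$. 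The theorem then becomes the question of when this matrix is a $0$--$1$ permutation matrix, and in that case $P_{g,k}=\delta_{k,p(g)}$ identifies the bijection $p$.

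For the direction ``$j$ isomorphism $\Rightarrow$ $P$ permutation'' I would invoke Corollary~\ref{coro1}: when $j$ is an isomorphism, $\hat j$ is an isomorphism of $G$ onto $\hat G$, so $\hat j(k)$ is a genuine character and $j(h)(k)=\overline{\hat j(k)(h)}$. Substituting this, the inner sum becomes $\sum_{h}j(g)(h)\,\overline{\hat j(k)(h)}$, which by the orthogonality relations equals $|G|$ when $j(g)=\hat j(k)$ and $0$ otherwise. Hence $P_{g,k}=\delta_{k,\hat j^{-1}(j(g))}$, that is $P(f)(g)=f\bigl((\hat j^{-1}\circ j)(g)\bigr)$, so $P$ is the permutation with $p=\hat j^{-1}\circ j$; since $j$ and $\hat j$ are both isomorphisms $G\to\hat G$, the map $p$ is an automorphism of $G$.

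The harder direction, and what I expect to be the main obstacle, is ``$P$ permutation $\Rightarrow$ $j$ isomorphism''. Assuming $P(f)=f\circ p$, the diagonal constraint is $P_{g,p(g)}=1$, i.e. $\frac1{|G|}\sum_{h}j(g)(h)\,j(h)(p(g))=1$. Each summand has modulus $1$ and there are $|G|$ of them, so this is the equality case of the triangle inequality; taking real parts forces each unit-modulus term to be exactly $1$ (not merely equal), which yields the pointwise identity $j(h)(p(g))=\overline{j(g)(h)}$ for all $g,h$. Recognizing the right-hand side as $\hat j(h)(g)$ by the very definition of $\hat j$, this reads $j(h)(p(g))=\hat j(h)(g)$. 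I would then use the observation already recorded in the paper that $\hat j:G\to(S^1)^G$ is always a homomorphism (even when $j$ is not): applying it in the variable $h$ gives $j(h_1h_2)(p(g))=j(h_1)(p(g))\,j(h_2)(p(g))$, and since $p$ is a bijection the element $p(g)$ ranges over all of $G$, so $j(h_1h_2)=j(h_1)j(h_2)$. Thus $j$ is a homomorphism and, being a bijection, an isomorphism. The delicate points to get right are the justification of the equality case of the triangle inequality and the bookkeeping that the bijection $p$ furnished by the hypothesis is exactly the one appearing in these identities; once the identity $j(h)(p(g))=\hat j(h)(g)$ is in hand, the conclusion follows cleanly from the always-valid multiplicativity of $\hat j$.
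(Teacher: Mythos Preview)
Your proof is correct. The forward direction (``$j$ isomorphism $\Rightarrow$ $P$ permutation'') matches the paper's argument essentially verbatim.

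For the converse, however, you take a genuinely different route. The paper evaluates $P(\delta_g)$ at the point $j^{-1}(\chi)$ and recognizes the result as $\frac{1}{\sqrt{|G|}}\,\cF\bigl(\overline{\hat j(g)}\bigr)(\chi)$; since this must be the delta function $\delta_g\circ p$, Fourier inversion yields $\hat j(g)=j(p^{-1}(g))$, whence $\hat j(G)\subset\hat G$, and Theorem~\ref{teo2} finishes. Your argument instead exploits only the diagonal entries $P_{g,p(g)}=1$: the equality case of the triangle inequality for the sum of $|G|$ unit-modulus terms forces each term to equal $1$, giving the pointwise identity $j(h)(p(g))=\hat j(h)(g)$ directly. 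You then invoke the always-valid multiplicativity of $\hat j$ in the variable $h$ and the surjectivity of $p$ to conclude that $j$ is a homomorphism. This avoids both the Fourier inversion formula and the appeal to Theorem~\ref{teo2}, making the converse more self-contained and elementary; the paper's route, on the other hand, produces the explicit relation $\hat j=j\circ p^{-1}$ as a byproduct before any homomorphism property is known.
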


\begin{proof}
Suppose that $P$ is a permutation. Then there is a bijection $p:G\to G$ such that $P(f)=f\circ p$ for all 
$f\in \C^G$. It is enough to prove that $j$ is a homomorphism since $j$ is a bijection.\\
Recall that $\delta_g:G\to \{0,1\}$ is the $g$-delta function ($g\in G$).
Then for every $x\in G$ we have
\begin{align*}
P(\delta_g)(x)&=\frac{1}{\sqrt{|G|}}\sum_{h\in G} {\mathcal F}(\delta_g)(j(h))\cdot (j(x))(h)= 
\frac{1}{|G|}\sum_{h\in G} (j(h))(g)\cdot (j(x))(h).
\end{align*}
Since $j$ is a bijection there exists an inverse map $j^{-1}:\hat G\to G$. Then, for every $\chi\in \hat{G}$ we have
\begin{align*}
P(f_g)(j^{-1}(\chi))=\frac{1}{|G|}\sum_{h\in G} j(h)(g)\chi(h)=\frac{1}{|G|}\sum_{h\in G} \overline{\hat{j}(g)(h)}\chi(h)=\frac{1}{\sqrt{|G|}}{\mathcal F}\left(\overline{\hat{j}(g)}\right)(\chi).
\end{align*}
On the other hand we have
$$
P(\delta_g)(j^{-1}(\chi))=\delta_g(p(j^{-1}(\chi)))=\left\{\begin{matrix}
1 & \mbox{if}\; g=p(j^{-1}(\chi)),\\
\phantom{-} & \phantom{-}\\
0 & \mbox{if}\; g\neq p(j^{-1}(\chi)).
\end{matrix}\right.
$$
It follows that for every $\chi\in\hat{G}$,
\begin{eqnarray}\label{ec1}
{\mathcal F}\left(\overline{\hat{j}(g)}\right)(\chi)=\left\{\begin{matrix}
\sqrt{|G|} & \mbox{if}\; g=p(j^{-1}(\chi)),\\
\phantom{-} & \phantom{-}\\
0 & \mbox{if}\; g\neq p(j^{-1}(\chi)).
\end{matrix}\right.
\end{eqnarray}
Now, by the Fourier inversion formula and relation~(\ref{ec1}) we have that for every $x\in G$
$$
\overline{\hat{j}(g)}(x)=\frac{1}{\sqrt{|G|}}\sum_{\chi\in \hat{G}} 
{\mathcal F}\left(\overline{\hat{j}(g)}\right)(\chi)\overline{\chi}(x)=\overline{\alpha}(x),
$$
where $\alpha=j(p^{-1}(g))$. Hence, $\hat{j}=j\circ p^{-1}$ and thus $\hat{j}(G)\subset \hat{G}$. 
Now, by Theorem~\ref{teo2} it follows that $j$ is a homomorphism as well as $p=\hat{j}^{-1}\circ j$.\\
Conversely, suppose that $j$ is an isomorphism and let $f\in \C^G$. Then, for every $g\in G$ we have
\begin{align*}
P(f)(g) &= (\tilde{j}\circ {\mathcal F})^2 (f)(g)= (\tilde{j}\circ {\mathcal F})(\tilde{j}({\mathcal F}(f))(g)=\tilde{j}({\mathcal F}(\tilde{j}({\mathcal F}(f)))(g)\\
&= {\mathcal F}(\tilde{j}({\mathcal F}(f)))(j(g))= \frac{1}{\sqrt{|G|}}\sum_{h\in G} \tilde{j}({\mathcal F}(f))(h)\cdot (j(g))(h)\\
&= \frac{1}{\sqrt{|G|}}\sum_{h\in G} {\mathcal F}(f)(j(h))\cdot (j(g))(h)\\
&= \frac{1}{\sqrt{|G|}}\sum_{h\in G} \frac{1}{\sqrt{|G|}}\sum_{l\in G} f(l)\cdot (j(h))(l)\cdot (j(g))(h)\\
&= \frac{1}{|G|}\sum_{l\in G}\sum_{h\in G} f(l)\cdot (j(h))(l)\cdot (j(g))(h)\\
&= \frac{1}{|G|}\sum_{l\in G}\sum_{h\in G} f(l)\overline{\hat{j}(l)(h)}j(g)(h)\\
&= \frac{1}{|G|}\sum_{l\in G} f(l)\sum_{h\in G} \overline{\hat{j}(l)}j(g)(h).
\end{align*}
From the orthogonality relations we have that
$$
\sum_{h\in G} \overline{\hat{j}(l)}j(g)(h)=\left\{\begin{matrix}
|G| & \mbox{if}\;\; \hat{j}(l)=j(g),\\
\phantom{-} & \phantom{-}\\
0 & \mbox{if}\;\; \hat{j}(l) \neq j(g).
\end{matrix}\right.
$$
Thus, for every $g\in G$, we have
$P(f)(g)=f(l)$
where $l=\hat{j}^{-1}(j(g))$.  This shows that $P$ is a permutation and $p=\hat{j}^{-1}\circ j$.
\end{proof}

\section{Examples}

\begin{ejemplo}\label{ex_1}

Consider the additive group $(\Z/n\Z)_{ad}$ of the ring $\Z/n\Z$. 
So, starting from this point we will denote the group operation
by $+$. The multiplication will  denote the ring multiplication. 
Consider the standard isomorphism $s:(\Z/n\Z)_{ad}\to \widehat{(\Z/n\Z)_{ad}}$ defined
by $s(x)(y)=e_n(xy)$ where $e_n(x)=e^{2\pi ix/n}$. Then any other
isomorphism $j:(\Z/n\Z)_{ad}\to \widehat{(\Z/n\Z)_{ad}}$ has the form $j=s\circ h$
for some isomorphism $h:(\Z/n\Z)_{ad}\to (\Z/n\Z)_{ad}$. The isomorphism $h$ is defined by its value $h(1)=l$ 
where $l$ and $n$ are relatively prime.
So, for all $x, y\in \Z/n\Z$
$$
j(x)(y)=s(h(x))(y)=s(xl)(y)=e_n(xyl).
$$
Now, since $j$ is an isomorphism, we have that $P=(\tilde{j}\circ {\mathcal F})^2$ is a permutation
with $p=\hat{j}^{-1}\circ j$ 
by Theorem~\ref{teo3}.
Calculations shows that $p(x)=-x$ for all such $j$. This shows that $P^2=(\tilde{j}\circ {\mathcal F})^4$ is the identity on $\C^{\Z/n\Z}$. Thus, the spectrum of $\tilde{j}\circ {\mathcal F}$ is a subset of the set $\{1, -1, i, -i\}$ 
of $4$th roots of unity. In the next example we show that the multiplicities  of eigenvalues do depend on $j$, but not too much. 
\end{ejemplo}

The following example is a particular case of Example~\ref{ex_1}.

\begin{ejemplo}
  Consider the additive group $(\Z/p\Z)_{ad}$ where 
  $p$ is an odd prime.  
To calculate the multiplicities of the eigenvalues in this case we use the multiplicative characters of 
$\Z/p\Z$. The point is that $\tilde j\circ {\mathcal F}$ decomposes on $2\times 2$ and $1\times 1$ 
matrix blocks in the
bases of multiplicative characters. It is well known but  we do the corresponding calculation here to point out 
the dependence on $j$, and that we may calculate the possible spectra of $\tilde j\circ {\mathcal F}$
without calculating Gauss sums.

A {\em multiplicative character} is a function $\psi:G\to \C$ such that $\psi(xy)=\psi(x)\psi(y)$ 
for all $x, y\in \Z/p\Z$ and $\psi(0)=0$.
Fix a generator $g$ of a multiplicative group of $\Z/p\Z$.
  Define $\psi_0, \psi_1,\ldots, \psi_{p-2}:G\to \C$ by setting $\psi_a(0)=0$, $\psi_a(g^b)=e^{2\pi iab/(p-1)}$. 
It is trivial to verify that $\psi_a$, $a=0,1,\ldots, p-2$ are multiplicative characters and account for all such.  Setting
  $$\delta_0(x)=\left\{\begin{array}{ll}
1 & \mbox{if}\;\; x=0,\\
\phantom{-} & \phantom{-}\\
  0 & \mbox{if}\;\; x\neq 0,
  \end{array}\right.$$
  we have that $\beta=(\delta_0, \alpha\psi_0, \alpha\psi_1,\ldots, \alpha\psi_{p-2})$ is an orthonormal ordered basis of $\C^G$, where $\alpha=1/\sqrt{p-1}$.

Let $j:G\to \widehat{G}$ be an isomorphism. Then, from Example~\ref{ex_1}, we have that $j:=j_l$ for some $l\in \{1,2,\ldots, p-1\}$, where $j_l(x)(y)=e_p(xyl)$. It follows that for every $f\in \C^G$ and for all $x\in G$,
\begin{align*}
\tilde{j}({\mathcal F}(f))(x)={\mathcal F}(f)(j_l(x))=\frac{1}{\sqrt{p}}\sum_{y\in G} f(y)j_l(x)(y)=\frac{1}{\sqrt{p}}\sum_{y\in G} f(y)e_p(xyl).
  \end{align*}

In particular, we have that
\begin{align*}
\tilde{j}({\mathcal F}(\delta_0))(x)=\frac{1}{\sqrt{p}}=\frac{1}{\sqrt{p}}\delta_0(x)+\frac{1}{\sqrt{p}}\psi_0(x)=\frac{1}{\sqrt{p}}\delta_0(x)+\frac{\sqrt{p-1}}{\sqrt{p}}\alpha\psi_0(x)
\end{align*}
for all $x\in G$.\\
Similarly, from the orthogonality relations we have that
\begin{align*}
\tilde{j}({\mathcal F}(\psi_0))(x) &=\left\{\begin{array}{ll}
-\frac{1}{\sqrt{p}} & \mbox{if}\;\; x\neq 0,\\
\phantom{-} & \phantom{-}\\
\frac{p-1}{\sqrt{p}} & \mbox{if}\;\; x=0,
\end{array}\right.\\
&=\frac{p-1}{\sqrt{p}}\delta_0(x) - \frac{1}{\sqrt{p}}\psi_0(x)
\end{align*}
and so
\begin{align*}
  \tilde{j}({\mathcal F}(\alpha\psi_0))(x)=\frac{\sqrt{p-1}}{\sqrt{p}}\delta_0(x)-\frac{1}{\sqrt{p}}\alpha\psi_0(x).
  \end{align*}

It follows, that the matrix of $\tilde{j}\circ {\mathcal F}$ relative to $\beta$ begins in the ``northwest'' with the $2\times 2$ matrix block
\begin{equation}\label{eq_1d}
\left(\begin{matrix}
  \frac{1}{\sqrt{p}} & \frac{\sqrt{p-1}}{\sqrt{p}}\\
  \phantom{-} & \phantom{-}\\
\frac{\sqrt{p-1}}{\sqrt{p}} & -\frac{1}{\sqrt{p}}
\end{matrix}\right).
\end{equation}

Now, let $\psi\neq \psi_0$ be a multiplicative character on $G$. Then, for all $x\in G$ we have that
\begin{align*}
\tilde{j}({\mathcal F}(\psi))(x)=\frac{1}{\sqrt{p}}\sum_{y\in G} \psi(y)e_p(xyl).
\end{align*}
If $x=0$, then $e_p(0yl)=1$ and $\tilde{j}({\mathcal F}(\psi))(0)=\frac{1}{\sqrt{p}}\sum_{y\in G} \psi(y)=0$ where the last equality follows by the orthogonality relations.

If $x\neq 0$, then
\begin{align*}
\tilde{j}({\mathcal F}(\psi))(x)=\frac{1}{\sqrt{p}}\sum_{y\in G} \psi(x^{-1}y)e_p(ly)=S_l(\psi)\cdot \overline{\psi}(x),
\end{align*}
where $S_l(\psi)=\frac{1}{\sqrt{p}}\sum_{y\in G} \psi(y)e_p(ly)$.

If $\psi\neq \overline{\psi}$, we have that $(\tilde{j}\circ {\mathcal F})(\psi)=S_l(\psi)\cdot \overline{\psi}$ and $(\tilde{j}\circ {\mathcal F})(\overline{\psi})=S_l(\overline{\psi})\cdot \psi$.

Relative to the pair $(\psi, \overline{\psi})$, we get a $2\times 2$ matrix block of the form
$$\left(\begin{matrix}
  0 & S_l(\overline{\psi})\\
  S_l(\psi) & 0
  \end{matrix}\right).$$
To determine the spectra of this matrix we must determine the product $S_l(\overline{\psi})\cdot S_l(\psi)$. We have that
\begin{align*}
  S_l(\psi)\cdot S_l(\overline{\psi}) &=\frac{1}{p}\sum_{x,y\in G} \psi(x)e_p(lx)\overline{\psi}(y)e_p(ly)=\frac{1}{p}\sum_{x,y\in G} e_p(l(x+y))\psi(x)\overline{\psi}(y)\\
  &=\frac{1}{p}\sum_{a\in G} e_p(la)\cdot \sum_{y\in G} \psi(a-y)\overline{\psi}(y)\\
  &=\frac{1}{p}\sum_{y\in G} \psi(-y)\overline{\psi}(y) + \frac{1}{p}\sum_{a\in G^\times} e_p(la)\sum_{y\in G} \psi(a-y)\overline{\psi}(y).
\end{align*}
Since $\psi(0)=0$, for the first sum we have that
$$\sum_{y\in G} \psi(-y)\overline{\psi}(y)=\sum_{y\in G^\times} \psi(-y)\psi(y)^{-1}=\sum_{y\in G^\times} \psi(-1)=(p-1)\psi(-1).$$
From the orthogonality relations, it follows that
$$\sum_{y\in G} \psi(a-y)\overline{\psi}(y)=\sum_{y\in G^\times} \psi\left(\frac{a}{y}-1\right)=-\psi(-1)$$
since for $y\in G^\times$, $x=\frac{a}{y}-1$ if and only if $y=\frac{a}{x+1}$ for $x\in G-\{-1\}$.\\
Thus,
$$S_l(\psi)\cdot S_l(\overline{\psi})=\frac{p-1}{p}\psi(-1)-\frac{\psi(-1)}{p}\sum_{a\in G^\times} e_p(la)=\psi(-1)\left(\frac{p-1}{p}+\frac{1}{p}\right)=\psi(-1),$$
since $\sum_{a\in G^\times} e_p(la)=-1$ by the orthogonality relations. It follows that $S_l(\psi)\cdot S_l(\overline{\psi})=\pm 1$ because of $1=\psi(1)=(\psi(-1))^2$. Since $\psi_k(g^r)=e^{2\pi i rk/(p-1)}$ and $g^{(p-1)/2}=-1$, we have that $\psi(-1)=-1$ for $\frac{p-1}{2}$ nontrivial multiplicative characters and $\psi(-1)=1$ for $\frac{p-1}{2}-1=\frac{p-3}{2}$ nontrivial multiplicative characters.

If $\psi=\overline{\psi}$, then $\psi(x)=\psi_{\frac{p-1}{2}}=(\frac{x}{p})$ is the Legendre symbol of $x$ mod $p$. 
In this case, $S_l(\psi)=\frac{1}{\sqrt{p}}\sum_{y\in G} e_p(ly)(\frac{y}{p})=\frac{1}{\sqrt{p}}\sum_{y\in G} e_p(ly^2)$ 
where the last equality follows by Theorem 4.17 of~\cite{Nathanson}.
From the consideration above, we know that 
$$
S_l(\psi_\frac{p-1}{2})^2=\left(\frac{-1}{p}\right) = \left\{\begin{array}{rl}
1  & \mbox{if}\;\; p\equiv 1\pmod{4},\\
\phantom{-} & \phantom{-}\\
-1 & \mbox{if}\;\; p\equiv 3\pmod{4}.\\
\end{array}\right.
$$
Thus,
\begin{align}\label{rel1}
S_l(\psi_\frac{p-1}{2})=\left\{\begin{array}{rl}
\pm 1 & \mbox{if}\;\; p\equiv 1\pmod{4} ,\\
\phantom{-} & \phantom{-}\\
\pm i & \mbox{if}\;\; p\equiv 3\pmod{4}.\\
\end{array}\right.
\end{align}
So, we just need to choose the proper sign. Notice that $S_l(\psi)=\overline{\psi(l)}S_1(\psi)$.
It follows that any sign in Eq.\ref{rel1} is possible depending on whether $l$ is a quadratic residue mod $p$ or not.
We know that all two possibilities happen, but we are not able to decide which one corresponds to which $l$
without evaluating the Gauss sum. The evaluation of the Gauss sum is less elementary than the calculations used here.      

Finally, since $(\tilde{j}\circ {\mathcal F})(\psi)=S_l(\psi)\cdot \psi$, relative to the pair $(\psi,\psi)$ 
we get a $1\times 1$ matrix of the form $(S_l(\psi))$.

It follows that the characteristic polynomial of $\tilde{j}\circ {\mathcal F}$ is
$$
(\lambda^2-1)\prod_{j=1}^{\frac{p-3}{2}}(\lambda^2-\psi_j(-1))(\lambda\pm 1),\;\;\mbox{if}\;\;p\equiv 1\mod 4
$$
and 
$$
(\lambda^2-1)\prod_{j=1}^{\frac{p-3}{2}}(\lambda^2-\psi_j(-1))(\lambda\pm i),\;\;\mbox{if}\;\;p\equiv 3\mod 4
$$
where the choice of signs depend on $l$ and, consequently, $j$.

\end{ejemplo}
Let $R_4=\{1,-1,i,-i\}$ be a group of $4$th roots of unity and $\Z[R_4]$ its group algebra. 
We write an element $a\in\Z[R_4]$ as $a=a_1+a_{-1}[-1]+a_i[i]+a_{-i}[-i]$. Where we suppose that $[1]=1$.
The sum in $\Z[R_4]$ is defined by 
\begin{align*}
(a_1+a_{-1}[-1]+a_i[i]+a_{-i}[-i])+(b_1+b_{-1}[-1]+b_i[i]+b_{-i}[-i])=\\
(a_1+b_1)+(a_{-1}+b_{-1})[-1]+(a_i+b_i)[i]+(a_{-i}+b_{-i})[-i] 
\end{align*}
and the multiplication is the prolongation of the multiplication in $R_4$ ($[\alpha][\beta]=[\alpha\beta]$) 
by linearity.
We say that $a\in\Z[R_4]$ represents the spectrum of an operator $A$ ($A^4=id$) if $a_\alpha$ is the multiplicity
of the eigenvalue $\alpha$. Let $s=1+[-1]+[i]+[-i]$

\begin{teorema}\label{th_Zp}
  Let $p>2$ be a prime.
 
If $p\equiv 1 \mod 4$ then either $\frac{p-1}{4}s+1$ or $\frac{p-1}{4}s+[-1]$
represents the spectrum of $\tilde{j}\circ {\mathcal F}:\C^{\Z_p}\to \C^{\Z_p}$. Both cases are possible
depending on $j$. 

If $p\equiv 3 \mod 4$ then either $\frac{p+1}{4}s-[i]$ or $\frac{p+1}{4}s-[-i]$
represents the spectrum of $\tilde{j}\circ {\mathcal F}:\C^{\Z_p}\to \C^{\Z_p}$. Both cases are possible
depending on $j$.  
\end{teorema}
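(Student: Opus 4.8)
The plan is to read off the complete spectrum directly from the block decomposition already established, by tabulating the eigenvalue contribution of each block, counting how many blocks of each type occur, and then assembling the multiplicities as an element of $\Z[R_4]$. First I would record the contribution of each block. The northwest block \eqref{eq_1d} has trace $0$ and determinant $-1$, so its characteristic polynomial is $\lambda^2-1$ and it contributes one eigenvalue $1$ and one eigenvalue $-1$. For each conjugate pair $\{\psi,\overline\psi\}$ with $\psi\neq\overline\psi$ the associated block $\left(\begin{smallmatrix}0 & S_l(\overline\psi)\\ S_l(\psi) & 0\end{smallmatrix}\right)$ has characteristic polynomial $\lambda^2-\psi(-1)$, so it contributes $\{1,-1\}$ when $\psi(-1)=1$ and $\{i,-i\}$ when $\psi(-1)=-1$. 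Finally the self-conjugate character $\psi_{(p-1)/2}$ gives the $1\times 1$ block $(S_l(\psi))$, whose single eigenvalue is $\pm 1$ when $p\equiv 1\pmod 4$ and $\pm i$ when $p\equiv 3\pmod 4$, by \eqref{rel1}.

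Next I would count the pairs of each type. Using $-1=g^{(p-1)/2}$ one gets $\psi_k(-1)=(-1)^k$, and since $p-1$ is even the characters $\psi_k$ and $\psi_{p-1-k}$ of a conjugate pair share the same sign, so each pair is cleanly of one type. As already noted, among the $p-2$ nontrivial characters exactly $\frac{p-3}{2}$ satisfy $\psi(-1)=1$ and $\frac{p-1}{2}$ satisfy $\psi(-1)=-1$. I would then strip off the self-conjugate character $\psi_{(p-1)/2}$, whose sign equals $\left(\frac{-1}{p}\right)$, from its class before pairing the remainder. For $p\equiv 1\pmod 4$ the self-conjugate character lies in the $\psi(-1)=1$ class, leaving $\frac{p-5}{2}$ characters there (hence $\frac{p-5}{4}$ pairs of type $\{1,-1\}$) and $\frac{p-1}{2}$ characters with $\psi(-1)=-1$ (hence $\frac{p-1}{4}$ pairs of type $\{i,-i\}$). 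For $p\equiv 3\pmod 4$ it lies in the $\psi(-1)=-1$ class, leaving $\frac{p-3}{4}$ pairs of each type.

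Summing the contributions in $\Z[R_4]$ is then mechanical. For $p\equiv 1\pmod 4$, the identity $1+\frac{p-5}{4}=\frac{p-1}{4}$ shows that the northwest block together with the conjugate pairs contributes each of $1,-1,i,-i$ with multiplicity $\frac{p-1}{4}$, i.e. $\frac{p-1}{4}s$; the self-conjugate block then adds $1$ or $[-1]$, giving the two stated forms. For $p\equiv 3\pmod 4$, the identity $1+\frac{p-3}{4}=\frac{p+1}{4}$ gives $1$ and $-1$ with multiplicity $\frac{p+1}{4}$ and $i,-i$ with multiplicity $\frac{p-3}{4}$, and the self-conjugate block bumps exactly one of $i,-i$ up by one; rewriting the result as $\frac{p+1}{4}s$ minus the untouched root yields $\frac{p+1}{4}s-[-i]$ or $\frac{p+1}{4}s-[i]$. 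That both cases genuinely occur follows from the already-observed relation $S_l(\psi)=\overline{\psi(l)}\,S_1(\psi)$, which flips sign precisely according to whether $l$ is a quadratic residue modulo $p$; as $l$ ranges over all residues both signs arise.

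I expect the main obstacle to be purely combinatorial bookkeeping: correctly attributing the self-conjugate Legendre character to its residue class (depending on $p\bmod 4$) before pairing, so that the leftover counts come out as the integers $\frac{p-5}{4},\frac{p-1}{4},\frac{p-3}{4}$, and verifying the dimension check that the total multiplicity equals $p$ in each case. No further analysis of the Gauss sum $S_l(\psi_{(p-1)/2})$ is needed beyond knowing its square, which \eqref{rel1} already supplies.
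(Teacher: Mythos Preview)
Your proposal is correct and follows exactly the approach of the paper: the block decomposition in the basis of multiplicative characters is the one already established in the preceding example, and your contribution is precisely the explicit bookkeeping that converts the characteristic polynomial displayed there into the $\Z[R_4]$ form stated in the theorem. The paper leaves this final count implicit, so your tabulation (removing the self-conjugate Legendre character from its parity class before pairing, then using $1+\frac{p-5}{4}=\frac{p-1}{4}$ and $1+\frac{p-3}{4}=\frac{p+1}{4}$) is exactly what is needed and matches the intended argument.
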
 

\begin{ejemplo}
  Consider the abelian group $G=(\Z/n\Z)^m$. Then, the automorphism group of $G$ is $GL_m(\Z/n\Z)$, the group
of invertible $m\times m$ matrices in $\Z/n\Z$. Let $\langle\cdot,\cdot\rangle:G\times G\to \Z/n\Z$ be the
natural scalar product on $G$:
$$\langle h,g\rangle=h_1g_1+h_2g_2+\dots +h_mg_m.$$  
For an isomorphism $j:G\to\hat{G}$, there is $M_j\in GL_m(\Z/n\Z)$ such that 
  \begin{align*}
    j(g)(h) &
    = e_n(\langle h, M_j g\rangle).
  \end{align*}
We can check that $M_{\hat{j}}=-M^t_j$ where $M^t_j$ denotes the transpose matrix of $M_j$. 
It follows that, in this case, $p(g)=-(M^t_j)^{-1}M_jg$.
\begin{lema}
Let $M_{j'}=T^tM_jT$ for some $T\in GL_m(\Z/n\Z)$. Then $\tilde{j'}\circ {\mathcal F}$ is unitary 
equivalent to $\tilde{j}\circ {\mathcal F}$. The unitary equivalence comes
from base change in $(\Z/n\Z)^m$ by $T$.
\end{lema}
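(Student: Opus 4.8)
The plan is to produce an explicit unitary operator conjugating $\tilde{j}\circ\cF$ into $\tilde{j'}\circ\cF$, and to recognize that this operator is precisely the permutation of $\C^G$ induced by the base change $T$. Since $T\in GL_m(\Z/n\Z)$ is a bijection of $G=(\Z/n\Z)^m$, I would define $P_T:\C^G\to\C^G$ by $P_T(f)(g)=f(Tg)$. This is a permutation in the sense of the paper: it permutes the orthonormal delta basis $\{\delta_g\}$ and is therefore unitary, with inverse $P_T^{-1}=P_{T^{-1}}$. The whole lemma then reduces to the single operator identity
$$\tilde{j'}\circ\cF = P_T\circ(\tilde{j}\circ\cF)\circ P_T^{-1}.$$

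To verify it, I would first write $\tilde{j}\circ\cF$ as a kernel operator. Unwinding the definitions and the description $j(g)(h)=e_n(\langle h, M_j g\rangle)$ gives
$$(\tilde{j}\circ\cF)(f)(g)=\cF(f)(j(g))=\frac{1}{\sqrt{|G|}}\sum_{h\in G} f(h)\,e_n(\langle h, M_j g\rangle),$$
and the same formula for $j'$ with $M_{j'}=T^tM_jT$ in place of $M_j$. The key algebraic ingredient is that $T^t$ is the adjoint of $T$ for the scalar product on $G$, i.e. $\langle h, T^tv\rangle=\langle Th, v\rangle$ for all $h,v\in G$, which is a one-line check from the definition of $\langle\cdot,\cdot\rangle$. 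Applying it with $v=M_jTg$ converts the exponent $\langle h, T^tM_jTg\rangle$ into $\langle Th, M_jTg\rangle$, so that
$$(\tilde{j'}\circ\cF)(f)(g)=\frac{1}{\sqrt{|G|}}\sum_{h\in G} f(h)\,e_n(\langle Th, M_jTg\rangle).$$

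Finally I would substitute $h\mapsto T^{-1}h$ in this sum, which is legitimate because $T$ is a bijection of $G$. This replaces $f(h)$ by $f(T^{-1}h)=(P_T^{-1}f)(h)$ and the exponent by $\langle h, M_j(Tg)\rangle$. Comparing the result with the kernel formula for $\tilde{j}\circ\cF$ applied to $P_T^{-1}f$ and evaluated at $Tg$, the right-hand side becomes $(\tilde{j}\circ\cF)(P_T^{-1}f)(Tg)=P_T\big((\tilde{j}\circ\cF)(P_T^{-1}f)\big)(g)$, which is exactly the claimed conjugation identity. Since $P_T$ is unitary, this yields the asserted unitary equivalence, realized by base change by $T$. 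I expect the only real care to be bookkeeping: fixing the direction of $P_T$ (pullback versus pushforward) and the side on which $T$ and $T^{-1}$ act, so that the change of variables matches the conjugation. The genuine mathematical content is merely the adjoint relation $\langle h, T^tv\rangle=\langle Th, v\rangle$ together with the unitarity of permutations.
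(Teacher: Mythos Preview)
Your argument is correct. The paper actually states this lemma without proof, so there is nothing to compare against beyond the authors' implicit claim that the equivalence ``comes from base change in $(\Z/n\Z)^m$ by $T$''; your computation is exactly the verification of that claim, and the only content---the adjoint identity $\langle h, T^t v\rangle = \langle Th, v\rangle$ together with the change of variable $h\mapsto T^{-1}h$---is handled correctly.
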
 
\end{ejemplo}

  If $M^t=M$, then $p(g)=-g$ for all $g\in G$ and, hence, $P^2=id$.
  As in Example~\ref{ex_1} it follows that the set of eigenvalues of $\tilde{j}\circ {\mathcal F}$ 
is a subset of $R_4$. Let $n=p>2$ be a prime. Then it is not hard to calculate the spectrum of 
$\tilde{j}\circ {\mathcal F}$. Indeed, if $M\in GL_m(\Z/p\Z)$ is symmetric ($M^t=M$), then there
exists $T\in GL_m(\Z/p\Z)$ such that
$$
M=T^t\left( \begin{array}{cc}\begin{array}{cc}1 & \\
                                                & 1 \end{array} & 0\\
                     0 & \begin{array}{cc}\ddots & \\
                                              & l \end{array}
           \end{array}\right)T,        
$$  
(see \cite{nref}, Theorem 9.4). So, changing the base in $(\Z/p\Z)^m$, we get that $\tilde j\circ {\mathcal F}$ is isomorphic to
a tensor product  of $m$  Fourier transforms of $\Z/p\Z$, all but one with $l=1$.
 We need the following lemma.
\begin{lema}
Let $s=1+[-1]+[i]+[-i]$, $k\in \Z$ and $\alpha\in R_4$.
\begin{itemize}
\item Let $a,b\in \Z[R_4]$ represent the spectrum of $A$ and $B$, respectively. Then $ab$ represents the spectrum of
$A\otimes B$.
\item $s\cdot [\alpha]=s$.
\item $s^m=4^{m-1}s$.
\item $(ks+[\alpha])^m=\frac{(4k+1)^m-1)}{4}s+[\alpha^m]$.
\item $(ks-[\alpha])^m=\frac{(4k-1)^m-(-1)^m}{4}s+(-1)^m[\alpha^m]$.
\end{itemize}
\end{lema}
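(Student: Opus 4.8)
The plan is to treat the first item separately, as a spectral statement, and the remaining four as purely algebraic identities in the commutative group algebra $\Z[R_4]$, all reducing to the two basic relations $s\cdot[\alpha]=s$ and $s^i=4^{i-1}s$ together with the binomial theorem.

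For the first item, I would recall that $A^4=B^4=\mathrm{id}$ forces the minimal polynomials to divide $x^4-1$, which has distinct roots, so $A$ and $B$ are diagonalizable with eigenvalues in $R_4$. Then $A\otimes B$ is diagonalizable with eigenvalues $\alpha\beta$ ranging over all products of an eigenvalue $\alpha$ of $A$ by an eigenvalue $\beta$ of $B$. The multiplicity of a fixed $\gamma\in R_4$ as an eigenvalue of $A\otimes B$ is therefore $\sum_{\alpha\beta=\gamma}a_\alpha b_\beta$, where $a_\alpha,b_\beta$ are the multiplicities recorded by $a$ and $b$. This is precisely the coefficient of $[\gamma]$ in the product $ab$, since multiplication in $\Z[R_4]$ is the convolution $[\alpha][\beta]=[\alpha\beta]$. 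Hence $ab$ represents the spectrum of $A\otimes B$.

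For the second item, since $R_4$ is a group, multiplication by $[\alpha]$ permutes the basis $\{[\beta]:\beta\in R_4\}$; as $s$ is the sum of all basis elements, it is fixed, giving $s[\alpha]=s$. The third item then follows by induction: $s^2=\sum_{\beta\in R_4}s[\beta]=|R_4|\,s=4s$, whence $s^m=s\cdot s^{m-1}=4^{m-1}s$.

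For the last two items I would expand by the binomial theorem, which is legitimate because $\Z[R_4]$ is commutative. Writing $(ks+[\alpha])^m=\sum_{i=0}^m\binom{m}{i}k^i s^i[\alpha^{m-i}]$, the $i=0$ term is $[\alpha^m]$, while for $i\ge 1$ the relations $s^i=4^{i-1}s$ and $s[\alpha^{m-i}]=s$ collapse each term to $\binom{m}{i}k^i4^{i-1}s$. Summing and recognizing $\sum_{i=1}^m\binom{m}{i}(4k)^i=(4k+1)^m-1$ yields the stated formula; the fifth item is identical once one tracks the sign $(-1)^{m-i}$ coming from $(-[\alpha])^{m-i}$, so that the binomial sum becomes $(4k-1)^m$ and the $i=0$ term becomes $(-1)^m[\alpha^m]$. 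The only points requiring care, and the closest thing to an obstacle, are isolating the $i=0$ term \emph{before} applying $s^i=4^{i-1}s$ (which fails at $i=0$) and correctly identifying the truncated binomial sums; everything else is mechanical.
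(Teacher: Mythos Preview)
Your proof is correct. The paper itself states this lemma without proof, treating it as routine, so your argument supplies exactly the elementary verification the authors omit: diagonalizability plus the convolution structure of $\Z[R_4]$ for the first item, and the binomial expansion collapsed via $s[\alpha]=s$ and $s^i=4^{i-1}s$ for the rest.
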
    
This lemma with Theorem~\ref{th_Zp} imply the following corollary.
\begin{corolario}
Let $p>2$ be a prime and the isomorphism $j:(\Z/p\Z)^m\to (\Z/p\Z)^m$ be symmetric (with $M_j^t=M_j$).

If $p\equiv 1 \mod 4$ then $\frac{p^m-1}{4}s+[\pm 1]$ represents the spectrum of 
$\tilde{j}\circ{\mathcal F}$. Both cases are possible, depending on the choice of $j$.

If $p\equiv 3 \mod 4$ then $\frac{p^m-(-1)^m}{4}s+(-1)^m[\pm (i)^m]$ represents the spectrum of 
$\tilde{j}\circ{\mathcal F}$. Both cases are possible, depending on the choice of $j$. 
\end{corolario}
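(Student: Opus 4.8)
The plan is to reduce the $m$-dimensional transform to a tensor product of one-dimensional ones by a congruence normal form, and then to compute the resulting product of spectra inside $\Z[R_4]$ using the power formulas of the lemma together with Theorem~\ref{th_Zp}. First I would use the diagonalization recalled just before the statement: since $M_j$ is symmetric and invertible over $\Z/p\Z$ with $p$ odd, there is $T\in GL_m(\Z/p\Z)$ with $M_j=T^t\,\mathrm{diag}(1,\dots,1,l)\,T$, where $l\equiv\det M_j$ modulo squares is the only invariant of the form beyond its rank. By the base-change lemma, $\tilde{j}\circ\cF$ is unitarily equivalent to the transform attached to $\mathrm{diag}(1,\dots,1,l)$, hence has the same spectrum; and for a diagonal form this transform is the tensor product of $m$ one-dimensional transforms on $\Z/p\Z$, of which $m-1$ carry the parameter $l=1$ and one carries the parameter $l$.

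By the first item of the lemma, the spectrum of $\tilde{j}\circ\cF$ is then $a^{m-1}b$ in $\Z[R_4]$, where $a$ is the spectrum of an $l=1$ factor and $b$ that of the $l$ factor, both furnished by Theorem~\ref{th_Zp}. For $p\equiv 1\pmod 4$ each is of the form $\tfrac{p-1}{4}s+[\alpha]$ with $\alpha\in\{1,-1\}$; for $p\equiv 3\pmod 4$ each is $\tfrac{p+1}{4}s-[\beta]$ with $\beta\in\{i,-i\}$. I would evaluate $a^{m-1}$ by the power formula of the lemma (with $4k+1=p$, respectively $4k-1=p$), and then multiply by $b$ using only $s^2=4s$, $s[\gamma]=s$ and $[\gamma][\delta]=[\gamma\delta]$. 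The $s$-coefficients telescope: the numerator collapses to $p^m-1$ when $p\equiv1\pmod4$ and to $p^m-(-1)^m$ when $p\equiv3\pmod4$, giving precisely $\tfrac{p^m-1}{4}$ and $\tfrac{p^m-(-1)^m}{4}$.

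For the group-element term only the product of the pure $[\,\cdot\,]$ parts survives, namely $[\alpha_1^{m-1}\alpha_l]$ when $p\equiv1\pmod4$ and $(-1)^m[\beta_1^{m-1}\beta_l]$ when $p\equiv3\pmod4$, where $\alpha_1,\beta_1$ and $\alpha_l,\beta_l$ are the signs attached to the $l=1$ and the $l$ factors. Writing $\beta_1=\epsilon_1 i$ and $\beta_l=\epsilon_l i$ with $\epsilon_1,\epsilon_l=\pm1$, the second term equals $(-1)^m[(\epsilon_1^{m-1}\epsilon_l)(i)^m]$, i.e.\ the stated $(-1)^m[\pm(i)^m]$, while the first is visibly $[\pm1]$. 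To control the sign I would use the identity $S_l(\psi)=\overline{\psi(l)}\,S_1(\psi)=\left(\tfrac{l}{p}\right)S_1(\psi)$ noted in the previous example, which shows that the sign contributed by the single $l$-factor is exactly the Legendre symbol $\left(\tfrac{l}{p}\right)$ times the fixed sign of the $l=1$ factors. Since $l\equiv\det M_j$ modulo squares and both residue classes are realized, for instance by $M_j=I$ and by $M_j=\mathrm{diag}(1,\dots,1,g)$ with $g$ a non-residue, the final sign attains both values, so both cases genuinely occur.

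The only real work is the bookkeeping in $\Z[R_4]$: the telescoping of the $s$-coefficient and the faithful tracking of the factor $(-1)^m$ produced by the $p\equiv 3$ power formula. Once this is done honestly there is no further obstacle, since diagonalization, unitary equivalence, tensor multiplicativity, and the single-prime spectrum of Theorem~\ref{th_Zp} are all already in hand.
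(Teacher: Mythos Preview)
Your proposal is correct and follows precisely the route the paper intends: diagonalize $M_j$ by congruence, invoke the base-change lemma to pass to a tensor product of one-dimensional transforms, and then multiply the spectra in $\Z[R_4]$ using Theorem~\ref{th_Zp} and the power formulas of the lemma. The paper itself gives no details beyond ``This lemma with Theorem~\ref{th_Zp} imply the following corollary,'' so you have simply filled in the bookkeeping that the authors omit; your additional remark that the sign is governed by the Legendre symbol $\bigl(\tfrac{\det M_j}{p}\bigr)$, and hence both values occur, is a welcome justification of the ``both cases are possible'' clause that the paper leaves implicit.
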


  In a similar way, if $M^t=-M$ then $p=id$ for all $g\in G$. In this case, the eigenvalues of 
$\tilde{j}\circ {\mathcal F}$ are $\pm 1$. 

In principle, using multiplicative characters we can calculate spectra of different Fourier transforms
on $(\Z/p\Z)^m$. As multiplicative characters with $\delta_0$ form a basis on $\C^{\Z/p\Z}$, the tensor products of
these characters form a basis on $\C^{(\Z/p\Z)^m}$. We show how to use it by an example.
First of all, we remind that if $f_1,f_2\in\C^{\Z/p\Z}$, then 
$f_1\otimes f_2\in\C^{(\Z/p\Z)^2}$ is defined as $(f_1\otimes f_2)(x_1,x_2)=f_1(x_1)f_2(x_2)$. 

\begin{ejemplo}
Let $G=(\Z/p\Z)^2$ and $j:G\to G$ be defined by 
$$
M_j=\left(\begin{array}{cc} 1 & 0\\
                            1 & 1
             \end{array}\right).
$$
Notice that $M_j$ is not symmetric. In this case we may find a transformation $T\in GL_2(\Z/p\Z)$ such that
$$
\left(\begin{array}{cc} 1 & 0\\
                            1 & 1
             \end{array}\right) =
T\left(\begin{array}{cc} 0 & 1\\
                            k & 0
             \end{array}\right)T^t,
$$
where $k=\frac{1-\sqrt{-3}}{2}$. Generally speaking, $k$ is not necessarily in $\Z/p\Z$ but may belong to its 
quadratic extension. Let $p\equiv 1 \mod 3$. Then $k\in\Z/p\Z$. We only consider this case. The other case
require additional considerations.
So, let $p\equiv 1 \mod 3$. Then by changing the base in $(\Z/p\Z)^2$, we obtain
$$
\tilde M_j=\left(\begin{array}{cc} 0 & 1\\
                            k & 0
             \end{array}\right).
$$
So, we have to study $\tilde{j}\circ {\mathcal F}$ with $j$ defined by $\tilde M_j$.
Let $\psi_1, \psi_2$ be nontrivial multiplicative characters on $\Z/p\Z$. Then 
$(\tilde{j}\circ{\mathcal F})(\psi_1\otimes\psi_2)=
S(\psi_1)S(\psi_2)\overline{\psi_2(k)}(\overline{\psi_2}\otimes\overline{\psi_1})$. 
So, for non-trivial multiplicative characters, we have the following two cases:
\begin{description}
\item[I] $\psi_1\neq \overline{\psi_2}$. Then the restriction of $\tilde{j}\circ\cF$ on subspace
spanned by \\ $\psi_1\otimes\psi_2,\;\overline{\psi_2}\otimes\overline{\psi_1}$ is 
$$
\left( \begin{array}{cc} 0 & S(\overline{\psi_1})S(\overline{\psi_2})\psi_1(k)\\
        S(\psi_1)S(\psi_2)\overline{\psi_2}(k) & 0
           \end{array}\right).
$$ 
The corresponding eigenvalues are $\lambda=\pm\sqrt{\psi_1(-k)\overline{\psi_2}(-k)}$.
\item[II] For nontrivial multiplicative characters $\psi$, the vectors $\psi\otimes\overline{\psi}$ are eigenvectors
with eigenvalues $\lambda=\psi(-k)$.
\end{description}  
We are left with the study of tensor products involving $\delta_0$ and trivial multiplicative character $\psi_0$.
Let $v_1=\delta_0+\frac{\sqrt{p}-1}{\sqrt{p-1}}\psi_0$ and $v_{-1}=\delta_0-\frac{\sqrt{p}+1}{\sqrt{p-1}}\psi_0$.
Notice that $v_{\alpha}$ is an $\alpha$-eigenvector of one-dimensional Fourier transform, see Eq.\ref{eq_1d}.
\begin{description}  
\item[III] Let $\psi$ be a nontrivial multiplicative character. On subspaces spanned by\\
$v_\alpha\otimes\psi, \overline{\psi}\otimes v_\alpha$, the $\tilde{j}\circ\cF$ acts as
$$
\alpha\left(\begin{array}{cc} 0 & S(\overline{\psi})\\
                 S(\psi)\overline{\psi}(k) & 0
             \end{array} \right).   
 $$
The corresponding eigenvalues are $\lambda=\pm\sqrt{\overline{\psi}(-k)}$.
\item[IV] The restriction of $\tilde{j}\circ\cF$ on $1$-dimensional subspace spanned by $v_\alpha\otimes v_\alpha$, 
is the identity operator, that is, $\lambda=1$.
\item[V]  The restriction of $\tilde{j}\circ\cF$ on subspace spanned by 
$v_1\otimes v_{-1},\; v_{-1}\otimes v_1$ is
$$
\left(\begin{array}{cc} 0 & -1\\
                      -1  & 0
     \end{array} \right).
$$ 
The corresponding eigenvalues are $\lambda=\pm 1$.
\end{description} 
Notice that $(-k)^3=1$. It follows that there are
\begin{itemize}
\item $\frac{p-4}{3}$ nontrivial multiplicative characters $\psi$ on $\Z/p\Z$ with $\psi(-k)=1$;
\item $\frac{p-1}{3}$ nontrivial characters with $\psi(-k)=e^{\frac{2\pi i}{3}}$;
\item $\frac{p-1}{3}$ nontrivial characters with $\psi(-k)=e^{-\frac{2\pi i}{3}}$.
\end{itemize}
\end{ejemplo}
Based on the above considerations, we get the following theorem.
\begin{teorema}
Let $p>2$ be a prime and $j:(\Z/p\Z)^2\to (\Z/p\Z)^2$ be an isomorphism with
$$
M_j=\left(\begin{array}{cc} 1 & 0\\
                            1 & 1
             \end{array}\right).
$$
Then, the eigenvalues of $\tilde{j}\circ\cF$ are the $6$th roots of unity. If $p\equiv 1 \mod 3$ and $p\geq 7$, then 
the multiplicities of the eigenvalues are the following:

\

\centering 
\begin{tabular}{rl}
\hline
\hline                        
$\lambda$ & \hspace{10.mm} multiplicity \\ [0.5ex]
\hline
$+1$ & $\frac{1}{2}\frac{(p-4)(p-7)}{9}+\frac{(p-1)^2}{9}+p-1$ \\[0.5ex]
$-1$ &  $\frac{1}{2}\frac{(p-4)(p-7)}{9}+\frac{(p-1)^2}{9}+2\frac{p-4}{3}+1$ \\ [0.5ex]
$e^{\frac{2\pi i}{6}}$ &  $\frac{3}{2}\frac{(p-4)(p-1)}{9}+2\frac{p-1}{3}$ \\ [0.5ex]
$e^{-\frac{2\pi i}{6}}$ &  $\frac{3}{2}\frac{(p-4)(p-1)}{9}+2\frac{p-1}{3}$ \\ [0.5ex]
$-e^{\frac{2\pi i}{6}}$ &  $\frac{3}{2}\frac{(p-4)(p-1)}{9}+p-1$ \\ [0.5ex]
$-e^{-\frac{2\pi i}{6}}$ &  $\frac{3}{2}\frac{(p-4)(p-1)}{9}+p-1$ \\ [1ex]
\hline
\end{tabular}
\end{teorema}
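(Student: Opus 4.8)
The plan is to prove the two assertions separately: first that every eigenvalue is a sixth root of unity (valid for all $p>2$), and then, under $p\equiv 1\pmod 3$ and $p\ge 7$, to read off the six multiplicities from the block decomposition of the preceding example. To begin I would pin down the permutation $p=\hat j^{-1}\circ j$ attached to $j$. By Theorem~\ref{teo3} and the relation $M_{\hat j}=-M_j^t$ recorded in the example, $p$ is the linear automorphism with matrix $-(M_j^t)^{-1}M_j$; for $M_j=\left(\begin{smallmatrix}1&0\\1&1\end{smallmatrix}\right)$ this equals $\left(\begin{smallmatrix}0&1\\-1&-1\end{smallmatrix}\right)$, whose characteristic polynomial is $\lambda^2+\lambda+1$ and whose cube is the identity. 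Hence $P=(\tilde j\circ\cF)^2$ satisfies $P^3=\mathrm{id}$, so $(\tilde j\circ\cF)^6=\mathrm{id}$. Since $\tilde j\circ\cF$ is unitary it is diagonalizable, and therefore its spectrum is contained in the group $R_6$ of sixth roots of unity. This settles the first sentence of the theorem for all odd primes.

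For the multiplicities I would reuse the five invariant-subspace types I--V established in the example, expressed in the orthonormal basis of $\C^{(\Z/p\Z)^2}$ built from tensor products of the one-dimensional eigenvectors $v_1,v_{-1}$ and the nontrivial multiplicative characters $\psi$. The key point is that in each block the eigenvalues are already written through the quantities $\psi(-k)$: Cases~IV and~V give the $v\otimes v$ contributions $1,1,1,-1$; Case~II gives eigenvalue $\psi(-k)$ on $\psi\otimes\overline\psi$; Case~III gives $\pm\sqrt{\overline\psi(-k)}$; and Case~I gives $\pm\sqrt{\psi_1(-k)\,\overline{\psi_2}(-k)}$, where the Gauss-sum identity $S(\psi)S(\overline\psi)=\psi(-1)$ from the previous example has already been used to collapse the products of Gauss sums. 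Because $-k=\omega:=e^{2\pi i/3}$ satisfies $(-k)^3=1$, every value $\psi(-k)$ lies in $\{1,\omega,\omega^2\}$ and every square root above lies in $R_6$, under the identifications $\omega=-e^{-2\pi i/6}$, $\omega^2=-e^{2\pi i/6}$, $\sqrt\omega=\pm e^{2\pi i/6}$, and $\sqrt{\omega^2}=\pm\omega$.

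The combinatorial input is the distribution of $\psi(-k)$ over the $p-2$ nontrivial characters. Since $p\equiv 1\pmod 3$ forces $-k$ to be a primitive cube root of unity in $\Z/p\Z$, a character $\psi_a$ satisfies $\psi_a(-k)=1$ iff $3\mid a$; this yields $N_1=\frac{p-4}{3}$ characters with value $1$ and $N_\omega=N_{\omega^2}=\frac{p-1}{3}$ with values $\omega,\omega^2$ (the hypothesis $p\ge 7$ guarantees $N_1\ge 1$, so all three classes are nonempty and the tabulation is nondegenerate). With these counts I would tally, eigenvalue by eigenvalue, the contributions of Cases~I--V; for instance the multiplicity of $+1$ collects $3$ from the $v\otimes v$ block, $2N_1$ from Case~III, $N_1$ from Case~II, and the Case~I subspaces with trivial product, and summing gives $\frac12\frac{(p-4)(p-7)}9+\frac{(p-1)^2}9+p-1$, exactly the first table entry.

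The main obstacle is the bookkeeping for Case~I, the large $\psi_1\otimes\psi_2$ block. Here I would count ordered pairs $(\psi_1,\psi_2)$ of nontrivial characters according to the pair of values $(u,v)=(\psi_1(-k),\psi_2(-k))$, noting that the eigenvalue pair $\pm\sqrt{uv^{-1}}$ depends only on $uv^{-1}$. Two reductions must be applied with care: first, the ordered pairs with $\psi_2=\overline{\psi_1}$ belong to Case~II rather than Case~I, and these are exactly the pairs with $uv=1$, so they must be removed in the subcases $(u,v)\in\{(1,1),(\omega,\omega^2),(\omega^2,\omega)\}$; second, the involution $(\psi_1,\psi_2)\mapsto(\overline{\psi_2},\overline{\psi_1})$ preserves each invariant $2$-plane, so after the subtraction the number of ordered pairs must be halved to count subspaces, each contributing the opposite pair $\{\beta,-\beta\}$ with $\beta=\sqrt{uv^{-1}}$. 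Executing this for the three values of $uv^{-1}$ and folding in the easy Cases~II--V produces the six entries; I would finish by checking that the multiplicities sum to $p^2$, which certifies that no subspace has been miscounted.
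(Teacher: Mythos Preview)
Your plan matches the paper's own argument: the theorem is stated in the paper simply as a consequence of the Case~I--V decomposition in the preceding example, and you correctly outline how to carry out the tabulation (your sample computation for the multiplicity of $+1$ checks out). One small improvement over the paper: for the first sentence you compute directly that $-(M_j^t)^{-1}M_j$ has order $3$, hence $(\tilde j\circ\cF)^6=\mathrm{id}$ for \emph{every} odd prime, whereas the paper's example only extracts the eigenvalues via the $\psi(-k)$ formulas after specializing to $p\equiv 1\pmod 3$. Two minor slips to tidy up: writing ``$-k=\omega:=e^{2\pi i/3}$'' conflates an element of $\Z/p\Z$ with a complex number (what you need is $(-k)^3=1$ in $\Z/p\Z$, hence $\psi(-k)\in\{1,\omega,\omega^2\}\subset\C$); and the phrase ``these are exactly the pairs with $uv=1$'' overstates things, since $uv=1$ is necessary but not sufficient for $\psi_2=\overline{\psi_1}$---your subsequent bookkeeping is fine, just reword this.
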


\end{document}